\renewcommand*{\backref}[1]{}
\renewcommand*{\backrefalt}[4]{%
    \ifcase #1 (Not cited.)%
    \or        (Cited on page~#2.)%
    \else      (Cited on pages~#2.)%
    \fi}
\definecolor{dkgreen}{rgb}{0,0.6,0}
\definecolor{gray}{rgb}{0.5,0.5,0.5}
\definecolor{mauve}{rgb}{0.58,0,0.82}
\tiny\color{gray},
\newcommand{\xar}[1]{\xrightarrow{{#1}}}
\def\@tocline#1#2#3#4#5#6#7{\relax
  \ifnum #1>\c@tocdepth 
  \else
    \par \addpenalty\@secpenalty\addvspace{#2}%
    \begingroup \hyphenpenalty\@M
    \@ifempty{#4}{%
      \@tempdima\csname r@tocindent\number#1\endcsname\relax
    }{%
      \@tempdima#4\relax
    }%
    \parindent\z@ \leftskip#3\relax \advance\leftskip\@tempdima\relax
    \rightskip\@pnumwidth plus4em \parfillskip-\@pnumwidth
    #5\leavevmode\hskip-\@tempdima
      \ifcase #1
       \or\or \hskip 1em \or \hskip 2em \else \hskip 3em \fi%
      #6\nobreak\relax
    \hfill\hbox to\@pnumwidth{\@tocpagenum{#7}}\par
    \nobreak
    \endgroup
  \fi}
\DeclareMathOperator{\spf}{Spf}
\DeclareMathOperator{\tr}{Tr}
\newtheorem{lemma}{Lemma}[section]
\newtheorem{theorem}[lemma]{Theorem}
\newtheorem{prop}[lemma]{Proposition}
\theoremstyle{definition}
\newtheorem{question}[lemma]{Question}
\newtheorem{definition}[lemma]{Definition}
\newtheorem{example}[lemma]{Example}
\newtheorem{remark}[lemma]{Remark}
\newcommand{\FF}{\mathbf{F}}
\newcommand{\Z}{\mathbf{Z}}
\newcommand{\Lk}{L_{K(n)}}
\newcommand{\M}{{M}}
\newcommand{\Mfg}{\M_{FG}}
\newcommand{\co}{\mathcal{O}}
\renewcommand{\SS}{\mathbf{S}}
\newcommand{\GG}{\mathbb{G}}
\newcommand{\Lone}{L_{K(1)}}
\newcommand{\Eoo}{{\mathbf{E}_\infty}}
\newcommand{\EOo}{{\mathbf{E}_\infty}}
\newcommand{\ol}[1]{\overline{#1}}
\newcommand{\fr}[1]{\mathfrak{#1}}
\newcommand{\E}[1]{\mathbf{E}_{{#1}}}
\newcommand{\bu}{\mathrm{bu}}
\newcommand{\bo}{\mathrm{bo}}
\newcommand{\TMF}{\mathrm{TMF}}
\newcommand{\W}{\mathrm{W}}
\title{Roots of unity in $K(n)$-local rings}
\author{Sanath Devalapurkar}
\email{sanathd@mit.edu}
\subjclass[2010]{Primary 55P43, 55S12}
\begin{document}
\begin{abstract}
    The goal of this paper is to address the following question: if $A$ is an
    $\E{k}$-ring for some $k\geq 1$ and $f\colon\pi_0 A \to B$ is a map of
    commutative rings, when can we find an $\E{k}$-ring $R$ with an $\E{k}$-ring
    map $g\colon A \to R$ such that $\pi_0 g = f$?  A classical result in the
    theory of realizing $\Eoo$-rings, due to Goerss--Hopkins, gives an
    affirmative answer to this question if $f$ is \'etale. The goal of this
    paper is to provide answers to this question when $f$ is ramified. We prove
    a non-realizability result in the $K(n)$-local setting for every $n\geq 1$
    for $H_\infty$-rings containing primitive $p$th roots of unity. As an
    application, we give a proof of the folk result that the Lubin--Tate tower
    from arithmetic geometry does not lift to a tower of $H_\infty$-rings over
    Morava $E$-theory.
\end{abstract}
\maketitle

\section{Introduction}\label{intro}
In this paper, we study some rigidity properties of structured ring spectra in
$K(n)$-local spectra (see \cite{hs} for a thorough introduction). The primary
question we are concerned with in this paper is the following:
\begin{question}\label{q1}
    Let $A$ be an $\E{k}$-ring for some $k\geq 1$, and let $f\colon\pi_0 A \to
    B$ be a map of commutative rings. Can we find an $\E{k}$-ring $R$ with an
    $\E{k}$-ring map $g\colon A \to R$ such that $\pi_0 g = f$?
\end{question}
As one might expect, Question \ref{q1} is hard to resolve in general,
particularly without any restrictions on the map $f$. Variants of this question
with additional conditions imposed on the map $f$ have been studied quite
extensively. In the case $k=\infty$, Question \ref{q1} was originally raised in
\cite{schwanzl-vogt}, where they used topological Hochschild homology to provide
a negative answer to the general form of the question: they showed that the
extension $\Z\to \Z[i]$ does not lift to an $\Eoo$-ring extension of the sphere
spectrum. The question of adjoining a primitive root of unity to an $\Eoo$-ring
in the {un}ramified case was studied in \cite[Theorem 3]{schwanzl-vogt} and
\cite[Example 2.2.8]{baker-richter-realize}. We refer the reader to the lovely
paper \cite{goerss-realizing} for further discussion of Question \ref{q1} from
the perspective of chromatic homotopy theory.

One solution to Question \ref{q1} (which sets up a general framework for
answering such questions) is provided by Goerss--Hopkins obstruction theory (see
\cite{goerss-hopkins}); using their work, the following result was deduced in
\cite{baker-richter-realize} (see also \cite[Theorem 7.5.0.6]{HA}):
\begin{theorem}\label{affirmative1}
    Question \ref{q1} admits an affirmative answer if $f$ is an \'etale map.
\end{theorem}
For instance, if $k$ is a finite field of characteristic $p$ and $\W(k)$ is the
ring of Witt vectors, then the map $\Z_p \to \W(k)$ is \'etale. As $\pi_0 \SS_p
= \Z_p$, Theorem \ref{affirmative1} gives an $\Eoo$-ring $\SS_{\W(k)}$, called
the spherical Witt vectors, such that $\pi_0 \SS_{\W(k)} = \W(k)$.

The goal of this paper is to address Question \ref{q1} in the world of
$K(n)$-local spectra when $f$ is ramified. Our main result states that $f$ in
general cannot be lifted to a map of structured ring spectra, even if one
restricts to studying $H_\infty$-rings. A $H_\infty$-ring structure is the
analogue of an $\Eoo$-ring structure in the stable \emph{homotopy} category; in
particular, $\Eoo$-rings provide examples of $H_\infty$-rings. We show:
\begin{theorem}\label{failure2}
    Let $n>0$. There is no nontrivial $K(n)$-local $H_\infty$-ring $R$ at a
    prime $p>2$ (resp. at $p=2$) such that $\pi_0 R$ contains a primitive $p$th
    root of unity (resp. a $4$th root of unity).
\end{theorem}
Since $\Eoo$-rings are, in particular, $H_\infty$-rings, Theorem \ref{failure2}
shows that no nontrivial $K(n)$-local $\Eoo$-ring $R$ at a prime $p>2$ (resp. at
$p=2$) such that $\pi_0 R$ contains a primitive $p$th root of unity (resp. a
$4$th root of unity).

One of the primary motivations for desiring a positive answer to Question
\ref{q1} is because it often allows for the construction of sheaves of
$\Eoo$-rings on chromatically interesting moduli stacks $M$ (such as the moduli
stack of formal groups $\Mfg$ and the moduli stack of elliptic curves) which
lift the structure sheaf of $M$. This is the perspective through which Question
\ref{q1} is discussed in \cite{goerss-realizing}. For instance, Theorem
\ref{affirmative1} was used by Goerss--Hopkins--Miller to construct a sheaf of
even-periodic $\Eoo$-rings on the \'etale site of the moduli stack of elliptic
curves; the global sections of this sheaf is the $\Eoo$-ring $\TMF$. A lot of
large-scale phenomena in chromatic homotopy theory are shadows of the hope
(discussed in \cite{goerss-realizing}) that there is some Grothendieck topology
on the moduli stack $\Mfg$ of one-dimensional formal groups for which the
structure sheaf $\co_{\Mfg}$ admits a lift to a sheaf of $\Eoo$-rings; the
global sections of this $\Eoo$-ring would be the sphere spectrum. Theorem
\ref{failure2} places severe restrictions on any Grothendieck topology on any
moduli stack $M$ (including $\Mfg$) for which there is a sheaf of $\Eoo$-rings
on the associated site lifting the structure sheaf of $M$.

Our proof of Theorem \ref{failure2} spans the whole of Section \ref{mainproofs},
and relies heavily on power operations obtained from the $H_\infty$-ring
structure imposed on $R$. The proof also relies on a result of Hahn's from
\cite{hahn}, which says that a $K(n)$-acyclic $H_\infty$-ring is
$K(n+1)$-acyclic.

The following is a natural question motivated by Theorem \ref{failure2}:
\begin{question}\label{higher-chrom}
    Consider the setup of Question \ref{q1}, and let $k\geq 1$ be finite. Is
    there an analogue of Theorem \ref{failure2} for $K(n)$-local $\E{k}$-rings?
    Namely, is there a $K(n)$-local $\E{k}$-ring $R$ such that $\pi_0 R$
    contains a primitive $p$th root of unity?
\end{question}
\begin{remark}
    Other notions of ramification in the world of structured ring spectra have
    been studied. For instance, the map $\bo\to \bu$ is an example of a
    ramified extension in the sense of Dundas--Lindenstrauss--Richter
    \cite{ramified-ring-spectra}.
\end{remark}

In Section \ref{lt-tower}, we study an application of Theorem \ref{failure2} to
lifting the Lubin--Tate tower from arithmetic geometry into the realm of
spectral algebraic geometry. The Lubin--Tate tower, originally introduced in
\cite{drinfeld}, and further studied (for instance) in \cite[Chapter 3]{rz},
plays an important role in the proof of the Jacquet--Langlands correspondence
(see \cite{rogawski}), and also carries information about power operations on
Morava $E$-theory by a result of Strickland's (see \cite{strickland-subgp}). As
the name suggests, it is a tower of moduli problems (which are in fact
represented by affine formal schemes) living over the Lubin--Tate moduli space.
The $n$th level of this tower parametrizes deformations of formal groups along
with a level $\Gamma_1(p^n)$-structure, i.e., a basis for the $p^n$-torsion in
the deformed formal group. As a consequence of Theorem \ref{failure2}, we prove:
\begin{theorem}\label{failure3}
    Let $\M_k^h = \spf A_k^h$ denote the Lubin--Tate space at level $p^k$ and
    height $h$, and let $E_h$ denote Morava $E$-theory at height $h$ and the
    prime $p$. If $k>0$, there is no $H_\infty$-ring over $E_h$ whose
    underlying ring is $A^h_k$.
\end{theorem}
It follows from the main result of \cite{strickland-subgp} that there is an
isomorphism $A^h_k \cong E_h^0(BC_{p^k})/\tr$, where $\tr$ is a certain ideal of
$E_h^0(BC_{p^k})$, known as the transfer ideal. The ring $E_h^0(BC_{p^k})/\tr$
parametrizes power operations on height $h$ Morava $E$-theory. Consequently,
Theorem \ref{failure3} says that rings of power operations do not lift to
derived algebraic geometry. A different proof of Theorem \ref{failure3} was
provided by Niko Naumann in private communication.

\subsection{Acknowledgements}
I'd like to thank Tyler Lawson for suggesting that a result like Theorem
\ref{failure2} is true, and for reading over an earlier draft of this paper.
I'm also grateful to Matt Ando, Jeremy Hahn, Marc Hoyois, Alex Mennen,
Catherine Ray, and Paul VanKoughnett for useful conversations, as well as to an
anonymous reader for a careful reading of this paper and helpful suggestions on
how to improve it.

\section{Ramification and structured ring spectra}\label{mainproofs}   
We will use the following terminology.
\begin{definition}
    A $\theta$-ring is a ring $A$ with an operator $\theta\colon A\to A$ such
    that the following identities are satisfied:
    \begin{align*}
	\theta(0) & = 0\\
	\theta(x+y) & = \theta(x) + \theta(y) + \frac{x^p+y^p-(x+y)^p}{p}\\
	\theta(xy) & = \theta(x) y^p + \theta(y) x^p + p\theta(x) \theta(y).
    \end{align*}
\end{definition}
These conditions imply that $\psi^p(x) = x^p + p\theta(x)$ is a ring
endomorphism of $A$.

The following result is described in \cite[Chapter VIII]{hinfty},
\cite{bousfield-power-ops}, and \cite[Section 3]{k1local} (the latter for
$\Eoo$-rings).
\begin{theorem}
    Let $R$ be a $K(1)$-local $H_\infty$-ring. Then, $\pi_0 R$ naturally has
    the structure of a $\theta$-ring via power operations, denoted $\psi^p$ and
    $\theta$.
\end{theorem}
Using the existence of these power operations, Mike Hopkins (in unpublished
work) gave a proof of the following fact.
\begin{lemma}\label{mjh}
    Let $R$ be the spectrum $K_p[\zeta_p]$ obtained by adjoining a primitive
    $p$th root of unity to $p$-adic $K$-theory.  Then $R$ is not an
    $H_\infty$-ring.
\end{lemma}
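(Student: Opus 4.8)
The plan is to argue by contradiction. Since $R$ is built from $p$-adic $K$-theory $K_p$ it is a $K_p$-algebra, hence $K(1)$-local, so if $R$ were an $\Eoo$-ring the operations $\psi^p,\theta\colon\pi_0 R\to\pi_0 R$ of the previous section would act on $\pi_0 R = \Z_p[\zeta]$, the ring of integers of $\QQ_p(\zeta)$ obtained by adjoining a primitive $p$th root of unity $\zeta$ to $\pi_0 K_p = \Z_p$. This is a totally ramified extension of degree $p-1$, so $p = (\text{unit})\cdot(\zeta-1)^{p-1}$ and the normalized valuation $v$ with $v(\zeta-1)=1$ satisfies $v(p)=p-1$. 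The crucial input is that $\psi^p$ is a ring homomorphism satisfying $\psi^p(\alpha)=\alpha^p+p\theta(\alpha)$, so that $\psi^p$ lifts the Frobenius: $\psi^p(\alpha)\equiv\alpha^p \pmod p$. Moreover the unit map $K_p\to R$ is a map of $\Eoo$-rings, so $\psi^p$ restricts to the Adams operation on $\pi_0 K_p = \Z_p$, which is the identity; in particular $\psi^p$ fixes $\Z_p$.

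First I would pin down $\psi^p(\zeta)$. Because $\psi^p$ fixes $\Z_p$ and the cyclotomic polynomial $\Phi_p$ has coefficients in $\Z\subset\Z_p$, applying $\psi^p$ to $\Phi_p(\zeta)=0$ shows that $\psi^p(\zeta)$ is again a root of $\Phi_p$; hence $\psi^p(\zeta)=\zeta^a$ for some $1\le a\le p-1$. On the other hand the Frobenius congruence gives $\psi^p(\zeta)\equiv\zeta^p = 1\pmod p$. Combining these, I must have $\zeta^a\equiv 1\pmod p$. But for $1\le a\le p-1$ the element $(\zeta^a-1)/(\zeta-1)=1+\zeta+\cdots+\zeta^{a-1}$ reduces to $a\neq 0$ in the residue field $\FF_p$ and is therefore a unit, so $v(\zeta^a-1)=1$. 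Since $1<p-1$ for odd $p$, this forces $\zeta^a-1\notin(p)$, contradicting $\zeta^a\equiv1\pmod p$. Hence no $\Eoo$-structure on $R$ exists.

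The step I expect to need the most care is the congruence bookkeeping rather than any hard computation: reducing modulo the maximal ideal $\frakm=(\zeta-1)$ is useless, since every conjugate $\zeta^a$ is congruent to $1$ there, and the argument only succeeds because the power-operation relation supplies the sharper congruence modulo $p=\frakm^{p-1}$. Conceptually, what the proof establishes is that the totally ramified ring $\Z_p[\zeta]$ admits no ring endomorphism lifting Frobenius, i.e. no $\theta$-algebra (equivalently $\delta$-ring) structure, and this is precisely the obstruction to realizing it as $\pi_0$ of a $K(1)$-local $\Eoo$-ring. The remaining routine checks are that $\psi^p$ is genuinely defined on all of $\pi_0 R$ and restricts compatibly along $K_p\to R$, which follow from the $\Eoo$-structure and the naturality of power operations.
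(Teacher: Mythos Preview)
Your argument is correct and follows essentially the same route as the paper's: show that $\psi^p(\zeta)$ is again a primitive $p$th root of unity (via the ring-homomorphism property applied to $\Phi_p$), combine with the Frobenius congruence $\psi^p(\zeta)\equiv\zeta^p=1\pmod p$, and derive a contradiction from $v(\zeta^a-1)=1<p-1=v(p)$ for odd $p$. The paper is terser about the valuation step, and your detour through ``$\psi^p$ fixes $\Z_p$'' is correct but unnecessary---$\Phi_p$ has coefficients in $\Z$, which any ring homomorphism fixes.
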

\begin{proof}
    The spectrum $R$ is $K(1)$-local.  If $\pi_0 R$ contained a primitive $p$th
    root of unity $\zeta$, then the power operations would supply an equality
    $$\psi^p(\zeta) = 1 + p\theta(\zeta).$$
    Since $\psi^p$ is a ring homomorphism, if $\Phi_p$ denotes the $p$th
    cyclotomic polynomial, we have
    $$\Phi_p(\psi^p(\zeta)) = \psi^p(\Phi_p(\zeta)) = \psi^p(0) = 0,$$
    so $\psi^p(\zeta)$ is another primitive $p$th root of unity, i.e.,
    $\psi^p(\zeta) = \zeta^k$ for some $1\leq k\leq p-1$.  Since
    $\theta(\zeta)\neq 0$, the prime $p$ must divide $1-\zeta^k$, which is
    impossible.
\end{proof}
In an email, Tyler Lawson gave a proof of the following generalization of
Lemma \ref{mjh} at the prime $2$.
\begin{prop}\label{tyler}
    There is no $K(1)$-local $H_\infty$-ring $R$ such that $\pi_0 R$ contains a
    primitive $4$th root of unity.
\end{prop}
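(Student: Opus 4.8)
The plan is to play the prime-$2$ power operations off against the cyclotomic relation, first reducing to the case $k=2$ where the computation is cleanest. Suppose $\zeta\in\pi_0 R$ is a primitive $2^k$th root of unity with $k\geq 2$, i.e. a root of the cyclotomic polynomial $\Phi_{2^k}(x)=x^{2^{k-1}}+1$, so that $\zeta^{2^{k-1}}=-1$. Then $i:=\zeta^{2^{k-2}}$ satisfies $i^2=\zeta^{2^{k-1}}=-1$, so $i$ is a primitive fourth root of unity lying in $\pi_0 R$. Thus it suffices to show that $\pi_0 R$ contains no element $i$ with $i^2=-1$.

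Next I would feed $i$ through the power operations of the previous section. Since $R$ is $K(1)$-local at $p=2$, its homotopy carries operations $\psi^2,\theta\colon\pi_0 R\to\pi_0 R$ with $\psi^2(x)=x^2+2\theta(x)$ and $\psi^2$ a ring homomorphism. Applying $\psi^2$ to $i$ gives $\psi^2(i)=i^2+2\theta(i)=-1+2\theta(i)$. On the other hand, because $\psi^2$ is a ring homomorphism it fixes $-1$, so
\[
\psi^2(i)^2=\psi^2(i^2)=\psi^2(-1)=-1.
\]
Squaring the first expression and comparing with the second yields $(-1+2\theta(i))^2=-1$, that is, $2\bigl(1-2\theta(i)+2\theta(i)^2\bigr)=0$ in $\pi_0 R$.

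The final step is to extract a contradiction from this factored identity $2c=0$, where $c:=1-2\theta(i)+2\theta(i)^2\equiv 1\pmod 2$. Here is where $K(1)$-locality does the real work: since $\pi_0 R$ is $2$-complete, the ideal $(2)$ lies in its Jacobson radical, so $c=1+2\bigl(\theta(i)^2-\theta(i)\bigr)$ is a unit. Multiplying by $c^{-1}$ forces $2=0$ in $\pi_0 R$. But a nonzero $K(1)$-local $\Eoo$-ring at the prime $2$ cannot have $2=0$ in its $\pi_0$ (it would then be $K(1)$-acyclic, hence $R=0$, in which case there is no primitive root of unity to speak of); equivalently, $2=0$ would give $-1=1$ and collapse the order of $\zeta$ below $2^k$, contradicting primitivity.

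I expect the genuine obstacle to be precisely this last passage from a formal identity in $\pi_0 R$ to an honest contradiction. The ring $\pi_0 R$ need not be a domain or a discrete valuation ring, so the naive valuation argument --- that in $\Z_2[\zeta]$ the element $1-\zeta$ has valuation $1$ while $2$ has valuation $2^{k-1}\geq 2$, making an equation such as $\psi^2(\zeta)-\zeta^2=2\theta(\zeta)$ untenable --- cannot be applied directly. The reduction to $k=2$ sidesteps this by converting everything into a statement about the single integer $2$ in $\pi_0 R$, whose behavior is controlled by the base $\Z_2$ through $2$-completeness; the careful point is to justify that $1+2x$ is invertible in $\pi_0 R$ and that $2\neq 0$ for nonzero $R$, both of which are exactly the inputs that fail for ordinary rings (for instance $\FF_2$-algebras) and thereby explain why the statement is special to the $K(1)$-local setting.
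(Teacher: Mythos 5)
Your argument is correct in substance, and it is genuinely different in execution from the paper's, in a way that matters. The paper (following Lawson) applies the exact product formula $\theta(xy)=\theta(x)y^2+\theta(y)x^2+2\theta(x)\theta(y)$ with $x=y=i$, together with $\theta(-1)=-1$, and obtains $1-2\theta(i)+2\theta(i)^2=0$; writing $c=1-2\theta(i)+2\theta(i)^2$, the paper proves $c=0$, i.e.\ that $2$ is \emph{invertible} in $\pi_0 R$, and the contradiction is then soft: if $2$ acts invertibly on a $K(1)$-local spectrum at $p=2$ then $K(1)\wedge R=0$ (since $2=0$ in $K(1)_*$), so $R\simeq 0$. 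You instead use only the ring-homomorphism property of $\psi^2$ and the defining relation $\psi^2(x)=x^2+2\theta(x)$, and squaring $\psi^2(i)=-1+2\theta(i)$ yields $2c=0$ --- exactly the paper's identity multiplied by $2$. What this buys you is robustness on the power-operation side: you never need the on-the-nose product formula for $\theta$ or the value $\theta(-1)=-1$ (which are the $\psi$-relations ``divided by $p$''). What it costs is a heavier endgame, since ``$2\cdot\mathrm{unit}=0$'' is strictly weaker than ``$2\cdot\mathrm{unit}=1$'', and you must separately rule out $2=0$.

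Two caveats about that endgame. First, ``$\pi_0 R$ is $2$-complete'' is slightly off: $K(1)$-locality makes $R$ a $2$-complete \emph{spectrum}, so $\pi_0 R$ is \emph{derived} $2$-complete rather than classically complete; fortunately derived completeness still implies $2\in\Jac(\pi_0 R)$, which is all you use, but that is the statement to invoke. Second, and more seriously, your claim that $2=0$ in $\pi_0 R$ would make $R$ $K(1)$-acyclic is not obvious and is false for general spectra --- $K(1)$ itself has $2=0$ in $\pi_0$ and is certainly not $K(1)$-acyclic. For $\Eoo$-rings it is true, but only via the Hopkins--Mahowald theorem (an $E_2$-ring with $p=0$ in $\pi_0$ is an $H\FF_p$-algebra) combined with $K(n)\wedge H\FF_p=0$; you are invoking a substantial theorem silently. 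Your alternative ending --- $2=0$ forces $-1=1$, hence $\zeta^{2^{k-1}}=1$, collapsing the order of $\zeta$ --- is elementary and preferable, but note that it requires ``primitive'' to mean exact multiplicative order $2^k$, whereas your reduction to $k=2$ used the root-of-$\Phi_{2^k}$ property (in a nonzero $\FF_2$-algebra, $1$ is a root of $\Phi_{2^k}$, so that definition alone gives no contradiction). Both your argument and the paper's implicitly need primitivity to include both properties, as the intended examples $\Z_p[\zeta_{p^k}]$ do; the paper's sharper identity $c=0$ simply avoids having to adjudicate the $2=0$ case at all.
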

Lawson asked if this result can be generalized to all heights and primes.
Theorem \ref{failure2} gives an affirmative answer to this question.
\begin{proof}
    Denote by $i$ the fourth root of unity in $\pi_0 R$. One can easily check
    that
    $$\theta(xy) = \theta(x)y^2 + \theta(y)x^2 + 2\theta(x)\theta(y);$$
    therefore, if $x=y=i$, then
    $$-1 = \theta(-1) = -2(\theta(i) - \theta(i)^2).$$
    This, however, implies that $2$ is invertible, which is impossible by Lemma
    \ref{p-inv}.
\end{proof}
In the proof of Proposition \ref{tyler}, we utilized the following easy
result.
\begin{lemma}\label{p-inv}
    Let $E$ be a $p$-local ring spectrum (where $p$ is any prime). If $p$ is a
    unit in $\pi_0 E$, then $E$ is $K(n)$-locally trivial for any $n\geq 1$.
\end{lemma}
\begin{proof}
    Since $p$ is a unit in $\pi_0 E$, the spectrum $E$ is $p$-locally rational.
    This implies that $K(n)_\ast E$ is zero for any $n\geq 1$.
\end{proof}
\begin{prop}\label{mainthm}
    Let $p>2$ be an odd prime. Suppose $A$ is a $\theta$-ring such that $p$ is
    not a unit in $A$. Then $A$ does not contain a primitive $p$th root of
    unity $\zeta$.
\end{prop}
\begin{proof}
    By definition, we have
    $$\theta(x+y) = \theta(x) + \theta(y) +
    \sum^{p-1}_{i=1}\frac{1}{p}\binom{p}{i} x^i y^{p-i}.$$
    By induction, we find that
    \begin{align}
	\theta\left(\sum^{k}_{i=1}x_i\right) & = \sum^k_{i=1} \theta(x_i) +
	\frac{\sum_{i=1}^k x_i^p - \left(\sum_{i=1}^k x_i\right)^p}{p}
	\label{multsum}
    \end{align}

    Suppose $A$ is a $\theta$-ring containing a primitive $p$th root of unity.
    Since
    \begin{align}\label{sump}
	1+\zeta+\cdots+\zeta^{p-1} = 0,
    \end{align}
    we learn that
    $$\theta(1+\zeta+\cdots+\zeta^{p-1}) = 0.$$
    Applying Equation \eqref{multsum} to the left hand side, we get
    \begin{align}\label{toprove}
	0 & = \sum^{p-1}_{i=0}\theta(\zeta^i) + \frac{\sum_{i=0}^{p-1}
	\zeta^{ip} - \left(\sum_{i=0}^{p-1} \zeta^i\right)^p}{p}
    \end{align}
    Equation \eqref{sump} implies that 
    \begin{align}\label{secondterm}
	\frac{\sum_{i=0}^{p-1} \zeta^{ip} - \left(\sum_{i=0}^{p-1}
	\zeta^i\right)^p}{p} = \frac{p-0}{p} = 1.
    \end{align}
    It remains to compute $\sum^{p-1}_{i=0}\theta(\zeta^i)$. As $\psi^p$ is
    multiplicative, we deduce that
    $$\theta(x^n) = \frac{(x^p+p\theta(x))^n-x^{np}}{p};$$
    this implies that the first term on the right hand side of Equation
    \eqref{toprove} is
    $$\sum^{p-1}_{i=0}\theta(\zeta^i) =
    \sum^{p-1}_{i=1}\frac{(1+p\theta(\zeta))^i-1}{p} =
    \sum^{p-1}_{i=1}\sum^i_{k=1} \binom{i}{k} p^{k-1}\theta(\zeta)^k.$$
    Splitting up terms, this sum becomes
    \begin{align*}
	\sum^{p-1}_{i=0}\theta(\zeta^i) & = \theta(\zeta)\sum_{i=1}^{p-1}i +
	\sum^{p-1}_{i=1}\sum^i_{k=2} \binom{i}{k} p^{k-1}\theta(\zeta)^k \\
	& = p\left(\frac{p-1}{2}\theta(\zeta) + \sum^{p-1}_{i=1}\sum^i_{k=2}
	\binom{i}{k} p^{k-2}\theta(\zeta)^k\right).
    \end{align*}
    Let
    $$Z = \frac{p-1}{2}\theta(\zeta) + \sum^{p-1}_{i=1}\sum^i_{k=2}
    \binom{i}{k} p^{k-2}\theta(\zeta)^k.$$
    Equation \eqref{toprove} and Equation \eqref{secondterm} show that
    $$0 = pZ + 1,$$
    which implies that $p$ is invertible in $A$; contradiction.
\end{proof}
Before proceeding, let us recall a theorem of Hahn's from \cite{hahn}:
\begin{theorem}[Hahn]\label{jeremy}
    A $K(n)$-acyclic $H_\infty$-ring $R$ is $K(n+1)$-acyclic.
\end{theorem}
\begin{proof}[Proof of Theorem \ref{failure2}]
    Proposition \ref{tyler} proves the result when $p=2$, so we can restrict to
    the case when $p$ is odd. Suppose $R$ is a nontrivial $K(1)$-local ring
    such that $\pi_0 R$ contains a primitive $p$th root of unity. By Lemma
    \ref{p-inv}, $p$ is not a unit in $\pi_\ast R$. If $n=1$, then $\pi_0 R$ is
    a $\theta$-ring. It follows from Proposition \ref{mainthm} that $\pi_0 R$
    is trivial, so $R$ is contractible.

    To prove the corollary at all heights, suppose $n\geq 2$, and let $R$ be a
    $K(n)$-local $\Eoo$-ring such that $\pi_0 R$ contains a primitive $p$th
    root of unity $\zeta_{p}$. As $\pi_0 R\xar{\pi_0\Lone} \pi_0 \Lone R$ is a
    ring homomorphism, the element $\pi_0\Lone(\zeta_{p})$ is another primitive
    $p$th root of unity in $\pi_0 \Lone R$.  By the arguments laid out above,
    $\Lone R\simeq 0$, i.e., $R$ is $K(1)$-acyclic. Theorem \ref{jeremy}
    therefore implies that $R$ is $K(n)$-acyclic for every $n\geq 2$, so
    $R\simeq \Lk R\simeq 0$, as desired.
\end{proof}
\begin{remark}
    By Theorem \ref{failure2}, any $\EOo$-ring $R$ such that $p^{1/p}\in \pi_0
    R$ must be $K(n)$-acyclic for every $n\geq 1$, since $p^{1/p} + 1$ is, up to
    units, a primitive $p$th root of unity. This fact can also be proven
    directly, by applying $\theta(-)$ to the equation $(p^{1/p})^p - p = 0$ and
    using the multiplicativity of $\psi^p$ as in the proof above.
\end{remark}

\section{The Lubin--Tate tower}\label{lt-tower}
In arithmetic geometry, the \emph{Lubin--Tate tower} (originally introduced in
\cite{drinfeld}; see also \cite[Chapter 3]{rz} and \cite[Section 2]{weinstein})
is a tower of finite flat extensions of the Lubin--Tate space, given by the
rings representing deformations (of the Honda formal group) along with a chosen
Drinfel'd level structure. These level structures play an important role in
constructing power operations on Morava $E$-theory. They are also of immense
arithmetic interest, as the Lubin--Tate tower simultaneously admits actions of
three groups which form the main object of study in local Langlands.

Let $R$ be a complete local Artinian $\W(\ol{\FF_p})$-algebra, and let
$(G,\iota)$ be a deformation of the Honda formal group of height $h$ to $R$.
Let $\fr{m}$ denote the maximal ideal of $R$ equipped with the group law given
by $G$.
\begin{definition}
    A \emph{Drinfel'd level $p^k$-structure} is a homomorphism $\phi\colon
    (\Z/p^k)^h\to \fr{m}$ such that there is an inequality of Cartier divisors
    $$\sum_{a\in (\Z/p^k)^h[p]} [\phi(a)] \leq G[p].$$
    If we pick a coordinate $x$ on $G$, then this amounts to asking that
    $$\left(\prod_{a\in (p^{k-1}\Z/p^k\Z)^h} (x-\phi(a))\right)|[p](x)$$
    inside $R[\![x]\!]$.
\end{definition}
Drinfel'd proved the following result (see \cite[Proposition 4.3]{drinfeld}).
\begin{theorem}[Drinfel'd]\label{drinfeld-rep}
    The functor that sends a complete local Artinian $\W(\ol{\FF_p})$-algebra
    $(R, \fr{m})$ to the set of deformations of the Honda formal group (of
    height $h$) along with a level $p^k$-structure is corepresented by an
    $\pi_0 E_h$-algebra $A_k^h$.
\end{theorem}
\begin{remark}\label{construct-drinfeld}
    These rings can be defined inductively, by adjoining the $p^k$-torsion in
    $G$; for instance, if $k>1$, then $A_k^h = A^h_{k-1}[x_1, ...,
    x_h]/([p](x_i) - \phi(a_i)),$ where $\{a_i\}$ is the canonical basis for
    $(\Z/p^k)^n$.
\end{remark}
\begin{example}\label{h-1-example}
    At height $h=1$, a level $p^k$-structure on $\GG_m$ is a choice of basis for
    the $p^k$-torsion; this is exactly the choice of a primitive $p^k$-torsion
    point. One might therefore expect that $A^1_k \cong \Z_p[\zeta_{p^k}]$. This
    is indeed true: we have $A_1^1 \cong \Z_p[\zeta_p]$ (see, e.g.,
    \cite[Section 2.3]{weinstein}; alternatively, this follows immediately from
    the explicit description given in \cite[Lemma in Proposition
    4.3]{drinfeld}). It follows from this that if $k>1$, then $A_k^1 \cong
    \Z_p[\zeta_{p^k}]$. Indeed, recall that $[p](x) = (1+x)^p-1$.  Moreover,
    note that if $\phi\colon (\Z/p^{k-1})^h\to A_{k-1}^1$ is the universal level
    $p^{k-1}$-level structure, then $\phi(1) = \zeta_{p^{k-1}}-1$. Using Remark
    \ref{construct-drinfeld}, we find that $A_k^1 \cong A_{k-1}^h[x]/([p](x) -
    \phi(1)) \cong \Z_p[\zeta_{p^k}],$ as desired.
\end{example}
The main result of this section is:
\begin{theorem}\label{towernolift}
    Let $\M_k^h = \spf A_k^h$ denote the Lubin--Tate space at level $p^k$ and
    height $h$, and let $E_h$ denote Morava $E$-theory at height $h$ and the
    prime $p$. If $k>0$, there is no $H_\infty$-ring over $E_h$ whose
    underlying ring is $A^h_k$.
\end{theorem}
\begin{proof}
    There is a ``determinant'' map of formal schemes $\M_k^h \to \M_k^1$
    constructed in \cite[Section 2.5]{weinstein}. We showed in Example
    \ref{h-1-example} that $A_k^1 \cong \Z_p[\zeta_{p^k}]$. The image of
    $\zeta_{p^k}$ under the induced map $\Z_p[\zeta_{p^k}] \to A_k^h$ is a
    primitive $p^k$th root of unity in $A_k^h$. Applying Theorem \ref{failure2},
    we conclude that $A_k^h$ does not lift to a $K(h)$-local $H_\infty$-ring; in
    particular, there is no $K(h)$-local $E_h$-algebra whose homotopy is
    isomorphic to $A_k^h$.
\end{proof}
\begin{remark}
    The PEL Shimura varieties with Drinfel'd level structures considered in
    \cite[\S III.1]{harris-taylor} do not lift to derived stacks: if they did,
    then the completion at a height $h$ point would give an $\Eoo$-ring lifting
    $A_k^h$, contradicting Theorem \ref{towernolift}.
\end{remark}

\bibliographystyle{amsalpha}
\bibliography{main}
\end{document}